\documentclass[12pt]{article}

\usepackage[utf8]{inputenc}
\usepackage[T1]{fontenc}
\usepackage{amsmath, amssymb, amsthm}
\usepackage{authblk}

\usepackage{mathrsfs}

\title{\bf Chirality and Racemization on Isotopy Classes of Loops: \\
{A Groupoid-Based Structural Theory}}

\author{\Large Takao Inou\'{e}}

\affil{\large Faculty of Informatics, Yamato University, \\ Osaka, Japan\footnote{Email: inoue.takao@yamato-u.ac.jp; \\ Personal Email: takaoapple@gmail.com \\ [I prefer my personal email address for correspondence.]}}

\date{February 25, 2026}

\theoremstyle{definition}
\newtheorem{definition}{Definition}[section]
\newtheorem{assumption}[definition]{Assumption}

\theoremstyle{plain}
\newtheorem{proposition}[definition]{Proposition}
\newtheorem{lemma}[definition]{Lemma}
\newtheorem{theorem}[definition]{Theorem}

\theoremstyle{remark}
\newtheorem{remark}[definition]{Remark}

\newcommand{\Iso}{\mathrm{Iso}}
\newcommand{\Mir}{\mathrm{Mir}}
\newcommand{\Atp}{\mathrm{Atp}}
\newcommand{\Mlt}{\mathrm{Mlt}}
\newcommand{\Inn}{\mathrm{Inn}}

\begin{document}

\maketitle

\begin{abstract}
We develop a theory of chirality and racemization on isotopy classes of finite
loops, formulated intrinsically within the loop isotopy groupoid understood in
the categorical sense.
Motivated by earlier work on quasigroups \cite{InoueQuasiChirality} and by the
classical medical paradigm of mirror-related enantiomers, we restrict admissible
mirror transitions to those generated by intrinsic, unit-preserving symmetries.
Within this framework, racemization is modeled as a two-state dynamics on
isotopy classes, with an effective rate determined by the presence of
mirror-isotopisms.
Our main result shows that this rate vanishes if and only if no loop isotopism
exists between a loop and its opposite, providing a structural criterion for
chirality.
A strengthened variant based on translation-generated symmetries is discussed
in the appendix.
\end{abstract}

\textbf{Keywords:}
loop,
isotopy,
loop isotopy groupoid,
chirality,
mirror-isotopism,
racemization,
autotopism.
\medskip

\textbf{MSC2020:} 20N05, 20N02.

\tableofcontents

\section{Introduction}
The present work is a continuation of our previous study on chirality and
racemization phenomena on isotopy classes of quasigroups \cite{InoueQuasiChirality}.
The original motivation stems from a classical medical example, namely the
thalidomide enantiomers, where left- and right-handed mirror forms exhibit
drastically different biological effects while being related by a simple
reflection.
In such situations, the fundamental issue is not merely the static distinction
between mirror images, but the existence (or absence) of a \emph{structural
mechanism} by which a system dynamically loses its handedness through
racemization.

Motivated by this observation, our previous work introduced a two-state
dynamical model on isotopy classes of quasigroups, in which chirality was defined
in terms of the non-existence of mirror-isotopisms.
Within that framework, racemization was interpreted as a symmetry-generated
transition between a structure and its mirror image, and a structural
obstruction to such transitions was identified.
However, quasigroups do not carry a distinguished identity element, and general
isotopies need not preserve loop structure.
From both an algebraic and a conceptual standpoint, this represents a genuine
limitation: a racemization theory that allows transitions outside the class of
loops fails to remain closed under the dynamics and departs from the intuition
of an internally consistent system.

The purpose of the present paper is to resolve this structural gap by developing
a chirality and racemization theory that is formulated \emph{entirely within the
world of loops}.
To this end, we replace the use of global group actions by the \emph{loop
isotopy groupoid}, understood strictly in the categorical sense as a small
category in which all morphisms are invertible.
This groupoid-based formulation allows us to encode isotopy equivalence while
retaining a unit-preserving framework.
Within this setting, mirror transitions are no longer introduced externally,
but are restricted to those generated by intrinsic loop symmetries, namely
autotopisms.
As a result, the dynamics remains closed in the class of loops and faithfully
reflects the original physical intuition behind racemization.

The main contributions of this paper can be summarized as follows:
\begin{itemize}
\item We formulate a two-state racemization dynamics on isotopy classes of
loops using the loop isotopy groupoid, avoiding any reliance on global group
actions.
\item We introduce a notion of chirality for loops based on the absence of
mirror-isotopisms within the groupoid framework.
\item By restricting admissible mirror transitions to those generated by
intrinsic symmetries (autotopisms), we obtain a structural vanishing criterion
for the effective racemization rate.
\item We show that the racemization rate on isotopy classes vanishes if and only
if no loop isotopism exists between a loop and its opposite.
\item In an appendix, we present a strengthened variant based on
translation-generated symmetries, yielding a strictly stronger obstruction to
chirality loss.
\end{itemize}

In this way, the present work provides a loop-internal and structurally closed
extension of the quasigroup-based theory, clarifying the precise role played by
unit-preserving symmetries in the algebraic modeling of chirality and
racemization.

Our formulation is based on the loop isotopy groupoid, understood purely
in the categorical sense, rather than on a global group action.

Related chirality phenomena for quasigroups were studied in a parallel framework
in \cite{InoueQuasiChirality}; the present work is logically independent and
focuses instead on a formulation closed within the category of loops.

The role of autotopisms, translations, and inner mappings in loop theory
is classical; see, for example, \cite{NagyStrambach,PhillipsVojtechovsky}.

\section{Loop Isotopy Groupoid and Mirror Loops}

\paragraph{Remark on terminology.}
Throughout this paper, the term \emph{groupoid} is used strictly in the
categorical sense, namely as a small category in which every morphism is
invertible.
No topological, smooth, or higher-categorical structure is assumed.

\begin{definition}[Loop isotopy groupoid]
Let $\mathscr L$ be the small groupoid whose objects are loops
$L=(X,\cdot)$ on a fixed finite set $X$, and whose morphisms
$h=(\alpha,\beta,\gamma):L\to M$ are loop isotopisms satisfying
\[
\alpha(x)\circ\beta(y)=\gamma(x\cdot y).
\]
All morphisms are invertible.
We write $L\sim M$ if such a morphism exists.
\end{definition}

\begin{definition}[Opposite loop]\label{def:opposite_functor}
For a loop $L=(X,\cdot)$, define its opposite loop
$L^\#=(X,\diamond)$ by
\[
x\diamond y := y\cdot x.
\]
\end{definition}

\begin{definition}[Mirror-isotopism]
\[
\Mir(L) := \Iso_{\mathscr L}(L,L^\#).
\]
If $\Mir(L)=\varnothing$, we call $L$ \emph{chiral}.
\end{definition}

\section{Two-State Dynamics and Class Functions}

\begin{definition}[Class function]
A function $f:\mathrm{Ob}(\mathscr L)\to\mathbb R$ is called a class function
if $L\sim M$ implies $f(L)=f(M)$.
Equivalently, $f$ factors through $\pi_0(\mathscr L)$.
\end{definition}

\begin{remark}
Here $\pi_0(\mathscr L)$ denotes the set of equivalence classes of loops
under the relation
\[
L \sim M \quad \Longleftrightarrow \quad
\exists\, h : L \to M \text{ in } \mathscr L,
\]
i.e.\ whenever there exists an isotopism between $L$ and $M$.
\end{remark}

\begin{remark}
The equivalence relation underlying $\pi_0(\mathscr L)$
is ordinary isotopy.
Mirror-isotopisms are not included in this relation,
since the distinction between a loop and its mirror
is essential for the notion of structural chirality
developed below. Mirror-isotopisms are treated separately as morphisms
between $L$ and its mirror $L^\#$.
\end{remark}

\begin{definition}[Two-state generator]
Let $k:\pi_0(\mathscr L)\to\mathbb R$ be a fixed function.
Define
\[
\mathcal L f(L)=k([L])\bigl(f(L^\#)-f(L)\bigr).
\]
\end{definition}

\begin{lemma}
The operator $\mathcal L$ preserves class functions.
\end{lemma}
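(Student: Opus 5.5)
To show that $\mathcal L f$ is a class function whenever $f$ is, I will take $L\sim M$ and check $\mathcal L f(L)=\mathcal L f(M)$. Since $k$ is defined on $\pi_0(\mathscr L)$, we have $k([L])=k([M])$ at once. It therefore suffices to prove $f(L^\#)-f(L)=f(M^\#)-f(M)$. Because $f(L)=f(M)$ by hypothesis, this reduces to $f(L^\#)=f(M^\#)$. That in turn follows once I show that $L\sim M$ implies $L^\#\sim M^\#$, i.e.\ that the mirror construction of Definition~\ref{def:opposite_functor} respects isotopy.

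The key step is to transport an isotopism to the opposite loops. Let $h=(\alpha,\beta,\gamma):L\to M$ satisfy $\alpha(x)\circ\beta(y)=\gamma(x\cdot y)$ for all $x,y\in X$, where $M=(X,\circ)$. Write $\diamond$ for the operation of $L^\#$ and $\diamond'$ for that of $M^\#$. I claim $h^\#:=(\beta,\alpha,\gamma):L^\#\to M^\#$ is an isotopism. Indeed,
\[
\beta(x)\diamond'\alpha(y)=\alpha(y)\circ\beta(x)=\gamma(y\cdot x)=\gamma(x\diamond y).
\]
The components are still bijections of $X$, and $L^\#,M^\#$ are again loops on $X$: the identity element is unchanged and the Latin-square property is symmetric under transposition. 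Hence $h^\#$ is a morphism of $\mathscr L$ and $L^\#\sim M^\#$. The class function property of $f$ then gives $f(L^\#)=f(M^\#)$, and combining the three equalities yields $\mathcal L f(L)=\mathcal L f(M)$.

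The only step needing care is the swap of $\alpha$ and $\beta$. The naive candidate $(\alpha,\beta,\gamma)$ is not in general an isotopism $L^\#\to M^\#$; one must interchange the first two components. As a side remark, $h\mapsto h^\#$ is compatible with composition and inverses, so $(-)^\#$ is a functor $\mathscr L\to\mathscr L$ and descends to an involution on $\pi_0(\mathscr L)$. Only the object-level statement $L\sim M\Rightarrow L^\#\sim M^\#$ is needed for the lemma.
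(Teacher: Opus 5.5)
Your proof is correct and follows the same route as the paper: $k([L])=k([M])$, then $L\sim M\Rightarrow L^\#\sim M^\#$ gives $f(L^\#)=f(M^\#)$. The one difference is that you explicitly verify the isotopism $h^\#=(\beta,\alpha,\gamma):L^\#\to M^\#$, including the swap of the first two components. The paper only asserts this step, saying that ``mirror-isotopy transports along isotopy.''
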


\begin{proof}
If $L\sim M$, then $[L]=[M]$ and hence $k([L])=k([M])$.
Since mirror-isotopy transports along isotopy,
we have $L^\# \sim M^\#$.
Thus, if $f$ is a class function,
\[
(\mathcal L f)(L)
=
(\mathcal L f)(M).
\]
\end{proof}

\begin{proposition}[Descent]
The generator descends to a well-defined operator
on functions on $\pi_0(\mathscr L)$.
\end{proposition}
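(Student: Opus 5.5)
The plan is to use the preceding Lemma, that $\mathcal L$ preserves class functions, together with the universal property of the quotient map $q:\mathrm{Ob}(\mathscr L)\to\pi_0(\mathscr L)$, $q(L)=[L]$. A function $\bar f:\pi_0(\mathscr L)\to\mathbb R$ is the same datum as a class function $f=\bar f\circ q$, since $q$ is surjective. So I would define the descended operator $\bar{\mathcal L}$ by
\[
(\bar{\mathcal L}\bar f)([L]) := (\mathcal L(\bar f\circ q))(L) = k([L])\bigl(\bar f([L^\#])-\bar f([L])\bigr),
\]
and then check that this definition is independent of the representative $L$.

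Independence of the representative is exactly the content of the Lemma. If $[L]=[M]$, then $L\sim M$, and since $\bar f\circ q$ is a class function, the Lemma gives $(\mathcal L(\bar f\circ q))(L)=(\mathcal L(\bar f\circ q))(M)$. Hence $\bar{\mathcal L}\bar f$ is a well-defined function on $\pi_0(\mathscr L)$. It satisfies the intertwining relation $(\bar{\mathcal L}\bar f)\circ q=\mathcal L(\bar f\circ q)$, which is the precise meaning of ``descends.'' Linearity of $\bar{\mathcal L}$ follows immediately from linearity of $\mathcal L$.

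The only substantive point, and the one I expect to need care, is the ingredient inside the Lemma: that $L\sim M$ implies $L^\#\sim M^\#$. This is what makes $[L]\mapsto[L^\#]$ a well-defined map on $\pi_0(\mathscr L)$. To make the argument self-contained, I would verify it directly. Take an isotopism $(\alpha,\beta,\gamma):L\to M$ with $\alpha(x)\circ\beta(y)=\gamma(x\cdot y)$. Writing $\diamond_L,\diamond_M$ for the opposite operations, we get
\[
\beta(x)\diamond_M\alpha(y)=\alpha(y)\circ\beta(x)=\gamma(y\cdot x)=\gamma(x\diamond_L y).
\]
So $(\beta,\alpha,\gamma):L^\#\to M^\#$ is an isotopism. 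Moreover $L^\#$ and $M^\#$ are loops with the same identity element, so this is a morphism of $\mathscr L$.

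With that in hand, descent follows formally by the universal property of the quotient, and no further obstacle is expected.
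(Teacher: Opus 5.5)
Your proof is correct and takes the same route as the paper: the paper just observes that $L\sim M$ implies $L^\#\sim M^\#$ and $k([L])=k([M])$, and your explicit check that $(\beta,\alpha,\gamma):L^\#\to M^\#$ is an isotopism supplies the step the paper only asserts. One small quibble: $L^\#$ and $M^\#$ need not share an identity element (each has the identity of $L$, respectively $M$), but nothing requires this, since morphisms of $\mathscr L$ are arbitrary isotopisms between loops on $X$.
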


\begin{proof}
If $L\sim M$, then $L^\#\sim M^\#$ and $k([L])=k([M])$.
Hence the generator agrees on representatives of the same class.
\end{proof}

\begin{proposition}[Descent]
The generator descends to a well-defined operator
on functions on $\pi_0(\mathscr L)$.
\end{proposition}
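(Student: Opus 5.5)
The plan is to define the descended operator explicitly on $\pi_0(\mathscr L)$ and check that the definition does not depend on the chosen representative. For a function $\bar f:\pi_0(\mathscr L)\to\mathbb R$, set $f=\bar f\circ[\,\cdot\,]$, which is a class function, and put
\[
(\bar{\mathcal L}\bar f)([L]) := k([L])\bigl(\bar f([L^\#])-\bar f([L])\bigr).
\]
By the preceding Lemma, $\mathcal L f$ is again a class function. So $\bar{\mathcal L}\bar f$ is exactly the function through which $\mathcal L f$ factors, which gives $\mathcal L(\bar f\circ[\,\cdot\,])=(\bar{\mathcal L}\bar f)\circ[\,\cdot\,]$.

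The key step is to show that the assignment $[L]\mapsto[L^\#]$ is well defined, i.e.\ that $L\sim M$ implies $L^\#\sim M^\#$. Take an isotopism $(\alpha,\beta,\gamma):L\to M$ with $\alpha(x)\circ\beta(y)=\gamma(x\cdot y)$. Swapping the arguments gives
\[
\beta(y)\,\diamond_M\,\alpha(x)=\alpha(x)\circ\beta(y)=\gamma(x\cdot y)=\gamma(y\diamond_L x),
\]
so $(\beta,\alpha,\gamma):L^\#\to M^\#$ is an isotopism. It is invertible because each component is a bijection, and $L^\#$ and $M^\#$ are loops with the same identity element. Hence $[L^\#]$ depends only on $[L]$. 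Combined with $k([L])=k([M])$, this shows the right-hand side above depends only on the class.

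I expect the only genuine obstacle to be this transport of isotopy through the opposite construction, which the earlier Lemma asserted without proof. It reduces to the coordinate swap above, and the swap is the same observation that makes $L\mapsto L^\#$ a functor on $\mathscr L$. Linearity of $\bar{\mathcal L}$ is immediate from the formula, and uniqueness of the descended operator follows because $[\,\cdot\,]$ is surjective onto $\pi_0(\mathscr L)$.
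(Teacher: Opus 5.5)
Your proof is correct and follows the paper's argument: $L\sim M$ implies $L^\#\sim M^\#$ and $k([L])=k([M])$, so the formula is independent of the representative. You also write out the isotopism $(\beta,\alpha,\gamma):L^\#\to M^\#$, which the paper only asserts; the one small fix is that what you need is that $L^\#$ keeps the identity of $L$ (and $M^\#$ that of $M$), not that $L^\#$ and $M^\#$ share an identity, which can fail for loops on the same set $X$.
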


\begin{remark}
Equivalently, the two-state dynamics depends only on the isotopy class $[L]$
and not on the choice of a representative loop.
\end{remark}

\section{Structural Chirality via Autotopisms}

\begin{remark}
The use of the loop isotopy groupoid, rather than a global group action,
allows mirror-isotopisms to be transported functorially between isotopic loops
without leaving the class of loops.
This categorical viewpoint is essential for the structural arguments below.
\end{remark}

\begin{definition}[Isotopisms]
For loops $L,M$, we write
\[
\Iso(L,M):=\Iso_{\mathscr L}(L,M).
\]
The elements of $\Iso(L,M)$ are called \emph{isotopisms} from $L$ to $M$.
\end{definition}

\begin{definition}[Autotopism group]
\[
\Atp(L)=\Iso_{\mathscr L}(L,L).
\]
The elements of $\Atp(L)$ are called \emph{autotopisms} of $L$.

We write
\[
\Mir(L):=\Iso_{\mathscr L}(L,L^\#),
\]
and call its elements \emph{mirror-isotopisms} of $L$.
\end{definition}

\begin{lemma}[Transport of mirror-isotopisms]\label{lem:transport_mirror}
Let $h:L\to M$ be an isotopism of loops.
For any mirror-isotopism $g\in\Mir(L)=\Iso(L,L^\#)$,
define
\[
\Phi_h(g):=h^\#\circ g\circ h^{-1}.
\]
Then $\Phi_h(g)$ is a mirror-isotopism from $M$ to $M^\#$, and
\[
\Phi_h:\Mir(L)\;\longrightarrow\;\Mir(M)
\]
is a bijection.
In particular, $\Mir(L)=\varnothing$ if and only if $\Mir(M)=\varnothing$.
\end{lemma}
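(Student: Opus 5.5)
The plan is to verify three things in order: that $h^\#$ is a well-defined isotopism $L^\#\to M^\#$; that $\Phi_h(g)$ therefore lies in $\Iso(M,M^\#)$; and that $\Phi_{h^{-1}}$ is a two-sided inverse of $\Phi_h$. The only real content is in the first step, since the statement uses $h^\#$ without a definition in the text above. For $h=(\alpha,\beta,\gamma):L\to M$ I will set
\[
h^\#:=(\beta,\alpha,\gamma),
\]
that is, swap the first two components and keep the third. I will write $\cdot$ for the operation of $L$, $\circ$ for that of $M$, and $\diamond_L,\diamond_M$ for the opposite operations.

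To check that $h^\#$ is an isotopism, I compute for $x,y\in X$:
\[
\beta(x)\diamond_M\alpha(y)=\alpha(y)\circ\beta(x)=\gamma(y\cdot x)=\gamma(x\diamond_L y).
\]
So $h^\#:L^\#\to M^\#$ is a morphism of $\mathscr L$, and its components are bijections because those of $h$ are. I expect this to be the main obstacle: one must notice that a naive $h^\#=h$ fails, and that swapping $\alpha$ and $\beta$ is exactly what the opposite operation requires. I also need that $L^\#$ and $M^\#$ are again loops on $X$ with the same identity, which is immediate from the definition, so they are objects of $\mathscr L$.

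Next I record that $(-)^\#$ is functorial. Composition in $\mathscr L$ is componentwise, $(\alpha',\beta',\gamma')\circ(\alpha,\beta,\gamma)=(\alpha'\alpha,\beta'\beta,\gamma'\gamma)$. Swapping components commutes with componentwise composition and with componentwise inversion, so
\[
(h'\circ h)^\#=h'^\#\circ h^\#,\qquad (h^{-1})^\#=(h^\#)^{-1},\qquad \mathrm{id}^\#=\mathrm{id}.
\]
For $g\in\Iso(L,L^\#)$, the composite $h^\#\circ g\circ h^{-1}$ is a chain of morphisms $M\to L\to L^\#\to M^\#$ in the groupoid. Hence it lies in $\Iso(M,M^\#)=\Mir(M)$.

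Finally, for bijectivity I will show that $\Phi_{h^{-1}}:\Mir(M)\to\Mir(L)$ is inverse to $\Phi_h$. Using functoriality,
\[
\Phi_{h^{-1}}(\Phi_h(g))=(h^{-1})^\#h^\#\,g\,h^{-1}h=(h^\#)^{-1}h^\#\,g=g,
\]
and symmetrically $\Phi_h\circ\Phi_{h^{-1}}=\mathrm{id}_{\Mir(M)}$. Bijectivity then gives the final assertion at once: $\Mir(L)=\varnothing$ if and only if $\Mir(M)=\varnothing$.
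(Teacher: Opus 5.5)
Your proof is correct and follows the same route as the paper: you transport $g$ by conjugation, $\Phi_h(g)=h^\#\circ g\circ h^{-1}$, and invert with $\Phi_{h^{-1}}$ using $(h^{-1})^\#=(h^\#)^{-1}$. The only difference is that you define the mirror on morphisms explicitly as $h^\#=(\beta,\alpha,\gamma)$ and check that it is an isotopism $L^\#\to M^\#$ compatible with composition and inverses, which fills in a step the paper only asserts by calling $(\cdot)^\#$ an involutive functor without ever saying what it does to morphisms.
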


\begin{proof}
By Definition~\ref{def:opposite_functor} (Section~2),
the mirror operation $(\cdot)^\#$ defines an involutive functor
on the loop isotopy groupoid $\mathscr L$.
In particular, if $h:L\to M$ is an isotopism, then
\[
h^\#:L^\#\to M^\#
\]
is also an isotopism.

Let $g\in\Mir(L)=\Iso(L,L^\#)$ be a mirror-isotopism of $L$.
Since $h$ is invertible in the groupoid $\mathscr L$,
its inverse $h^{-1}:M\to L$ is an isotopism.
Therefore the composite
\[
h^\#\circ g\circ h^{-1}:M\longrightarrow M^\#
\]
is well-defined and is an element of $\Iso(M,M^\#)=\Mir(M)$.
This defines a map
\[
\Phi_h:\Mir(L)\longrightarrow\Mir(M),
\qquad
\Phi_h(g)=h^\#\circ g\circ h^{-1}.
\]

Since the mirror functor is involutive, we have
\[
(h^{-1})^\#=(h^\#)^{-1}.
\]
It follows that the inverse map of $\Phi_h$ is given by
$\Phi_{h^{-1}}$, and hence $\Phi_h$ is a bijection.
\end{proof}
\begin{assumption}[Symmetry-generated transitions]
Mirror transitions are generated solely by elements of $\Mir(L)$,
with strictly positive weights
\[
w_L:\Mir(L)\to(0,\infty),
\]
satisfying transport invariance under isotopy.
\end{assumption}

\begin{remark}\label{rem:positive_weights}
The assumption that all weights are strictly positive is essential.
If merely nonnegative weights were allowed, the vanishing of the total rate
could occur by cancellation, even in the presence of mirror-isotopisms.
The present assumption excludes such accidental degeneracies and ensures that
$k([L])=0$ reflects a genuine structural obstruction.
\end{remark}

\begin{proposition}[Class invariance]
\[
k([L])=\sum_{g\in\Mir(L)} w_L(g)
\]
depends only on the isotopy class $[L]$.
\end{proposition}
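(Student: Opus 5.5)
The plan is to show that for isotopic loops $L\sim M$ the two sums defining $k([L])$ and $k([M])$ agree term by term after reindexing. This reindexing is supplied by the bijection $\Phi_h$ of Lemma~\ref{lem:transport_mirror}. The claimed formula then defines a function on $\pi_0(\mathscr L)$, independent of the chosen representative.

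Concretely, fix $L\sim M$ and choose an isotopism $h:L\to M$. By Lemma~\ref{lem:transport_mirror}, $\Phi_h:\Mir(L)\to\Mir(M)$, $g\mapsto h^\#\circ g\circ h^{-1}$, is a bijection. Since $X$ is finite, both $\Mir(L)$ and $\Mir(M)$ are finite sets, so the sums are finite and well defined, with each sum $0$ if the set is empty. I read the transport invariance in the Assumption on symmetry-generated transitions as the identity
\[
w_M(\Phi_h(g)) = w_L(g)\qquad\text{for all } g\in\Mir(L) \text{ and all isotopisms } h:L\to M.
\]
Reindexing the sum by the bijection $\Phi_h$ then gives
\[
\sum_{g'\in\Mir(M)} w_M(g') \;=\; \sum_{g\in\Mir(L)} w_M(\Phi_h(g)) \;=\; \sum_{g\in\Mir(L)} w_L(g),
\]
which is the required equality.

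Finally, I will observe that the result does not depend on which isotopism $h$ was chosen. Only the existence of some bijection preserving weights is needed, and both outer sums are intrinsic to $L$ and $M$ respectively. Hence $k$ is constant on each isotopy class and factors through $\pi_0(\mathscr L)$, consistent with its use in the two-state generator.

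The main obstacle is pinning down what ``transport invariance'' means, since the Assumption states it only informally. If it were meant only for some preferred isotopism, or only up to conjugation by autotopisms, I would need to check that the reindexing identity still holds for the particular $h$ chosen. The fallback is that any two isotopisms $h,h':L\to M$ differ by an autotopism $a=h^{-1}h'\in\Atp(L)$. Then $\Phi_{h'}=\Phi_h\circ\Phi_a$, and $\Phi_a$ permutes $\Mir(L)$. So it suffices that $w_L$ be invariant under the action $g\mapsto a^\#\circ g\circ a^{-1}$ of $\Atp(L)$, and I would state this explicitly as the content of the Assumption.
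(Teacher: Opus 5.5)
Your proposal is correct and follows the paper's proof: you reindex the sum over $\Mir(M)$ through the bijection $\Phi_h$ of Lemma~\ref{lem:transport_mirror}, and you use transport invariance, $w_M(\Phi_h(g))=w_L(g)$, to match corresponding terms. Your extra remarks on the finiteness of $\Mir(L)$ and on independence from the choice of $h$ are sound but not needed, since both sums are intrinsic and a single isotopism $h$ suffices.
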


\begin{proof}
If $h:L\to M$ is an isotopism,
Lemma~\ref{lem:transport_mirror} gives a bijection
$\Mir(L)\to\Mir(M)$.
By transport invariance of the weights,
corresponding terms have equal weight.
Hence the sums defining $k([L])$ and $k([M])$ coincide.
\end{proof}

\begin{theorem}[Structural vanishing criterion]
The following are equivalent:
\begin{enumerate}
\item $k([L])=0$,
\item $\Mir(L)=\varnothing$,
\item there is no loop isotopism $L\to L^\#$.
\end{enumerate}
\end{theorem}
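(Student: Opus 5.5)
The plan is to prove the cycle of equivalences by establishing $(2)\Leftrightarrow(3)$ directly from the definitions, and then $(1)\Leftrightarrow(2)$ from the formula for $k([L])$ in the Class invariance proposition together with the positivity of the weights.

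For $(2)\Leftrightarrow(3)$, I will simply unwind the definitions. By definition $\Mir(L)=\Iso_{\mathscr L}(L,L^\#)$, and the morphisms of $\mathscr L$ from $L$ to $L^\#$ are exactly the loop isotopisms $L\to L^\#$. Hence $\Mir(L)=\varnothing$ holds precisely when no loop isotopism $L\to L^\#$ exists. Before this, I will note that $L^\#$ is itself a loop on $X$ with the same identity element, so that it is a legitimate object of $\mathscr L$.

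For $(2)\Rightarrow(1)$, if $\Mir(L)=\varnothing$ then the sum defining $k([L])$ is empty, so $k([L])=0$. For $(1)\Rightarrow(2)$, I argue by contraposition. Suppose $g\in\Mir(L)$. Since $X$ is finite, $\Mir(L)$ is a finite set (a subset of $\mathrm{Sym}(X)^3$), so $k([L])$ is a finite sum of real numbers. By the Assumption on symmetry-generated transitions, every term satisfies $w_L(g')>0$. Therefore
\[
k([L])\ \ge\ w_L(g)\ >\ 0,
\]
and in particular $k([L])\neq 0$. This is exactly where Remark~\ref{rem:positive_weights} is used: strict positivity rules out cancellation among the terms.

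Finally, I will check that the statement does not depend on the choice of representative of $[L]$. The Class invariance proposition shows that $k$ is well defined on $[L]$, and Lemma~\ref{lem:transport_mirror} shows that the emptiness of $\Mir(L)$ is an isotopy invariant. So conditions (1)–(3) are all conditions on the class $[L]$, and the equivalence holds for any representative.

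The only step with any content is $(1)\Rightarrow(2)$, and it rests entirely on two points: finiteness of $\Mir(L)$, so that the sum makes sense, and strict positivity of the weights. The remaining implications are tautological.
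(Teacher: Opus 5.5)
Your proposal is correct and follows essentially the same route as the paper. It gets (2)$\Leftrightarrow$(3) by unwinding $\Mir(L)=\Iso_{\mathscr L}(L,L^\#)$, and (1)$\Leftrightarrow$(2) from the empty-sum observation together with strict positivity of the weights on a finite index set. One small difference is in your favour: you derive finiteness of $\Mir(L)$ from finiteness of $X$ (as a subset of $\mathrm{Sym}(X)^3$), whereas the paper simply assumes it.
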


\begin{proof}
Recall that, by definition, $k([L])$ is given as the total weight
\[
k([L])=\sum_{g\in\Mir(L)} w_L(g),
\]
where the index set is $\Mir(L)=\Iso_{\mathscr L}(L,L^\#)$.
Assume that $\Mir(L)$ is finite and that each weight satisfies $w_L(g)>0$.

If $\Mir(L)=\varnothing$, then the sum is an empty sum and hence equals $0$.
Therefore $k([L])=0$.

Conversely, suppose that $\Mir(L)\neq\varnothing$.
Since $\Mir(L)$ is finite, we may write $\Mir(L)=\{g_1,\dots,g_n\}$ with $n\ge 1$.
Then
\[
k([L])=\sum_{i=1}^n w_L(g_i),
\]
and each term is strictly positive, so the whole sum is strictly positive.
Hence $k([L])>0$, and in particular $k([L])\neq 0$.
This proves that $k([L])=0$ holds if and only if $\Mir(L)=\varnothing$,
i.e.\ (1) and (2) are equivalent.

Finally, (2) and (3) are equivalent because
\[
\Mir(L)=\Iso_{\mathscr L}(L,L^\#)
\]
by definition: $\Mir(L)$ is empty precisely when there exists no isotopism
from $L$ to its mirror $L^\#$ in the groupoid $\mathscr L$.
\end{proof}

\section{Examples}

\paragraph{Groups.}
Let $L$ be a group.
Then the inversion map $x\mapsto x^{-1}$ induces an isotopism between
$L$ and its opposite loop $L^\#$.
Consequently, $\Mir(L)\neq\varnothing$, and the effective racemization rate
$k([L])$ is strictly positive.
In particular, groups are always achiral in the sense of the present theory.
This is consistent with the fact that group structure admits abundant
unit-preserving symmetries.

\paragraph{Non-associative loops.}
For non-associative loops, the situation is markedly different.
In general, there need not exist any loop isotopism between a loop $L$
and its opposite $L^\#$.
When $\Mir(L)=\varnothing$, Theorem~4.7 implies that the effective
racemization rate vanishes identically, and $L$ is structurally chiral.
Such examples arise naturally among non-associative loops with restricted
autotopism groups, where intrinsic symmetries are too sparse to generate
mirror transitions.

\paragraph{Comparison with the quasigroup case.}
In the quasigroup setting, isotopies need not preserve the existence of a
distinguished identity element, and mirror transitions may occur through
isotopies that leave the class of loops.
The present loop-internal formulation excludes such external mechanisms.
As a result, structural chirality is governed purely by unit-preserving
symmetries, clarifying the precise role played by autotopisms in obstructing
or enabling racemization.

\section{Conclusion}

We have established a loop-internal chirality theory based on the
loop isotopy groupoid.
The effective racemization rate vanishes precisely when no mirror-isotopism exists.

In closing, we briefly return to the original medical motivation that inspired
this line of research.
In the case of thalidomide, the coexistence of mirror-related enantiomers with
radically different biological effects highlights the importance of understanding
not only static chirality, but also the mechanisms by which chirality may be lost
or preserved under dynamical processes.
From an abstract algebraic viewpoint, this corresponds to asking whether a
structure admits an internal symmetry mechanism that identifies it with its
mirror image.
The loop-theoretic framework developed in this paper provides a precise
mathematical analogue of this question: structural chirality is preserved exactly
when no unit-preserving isotopy exists between a loop and its opposite.
In this sense, the present theory offers a conceptually faithful abstraction of
racemization phenomena, demonstrating how the presence or absence of intrinsic
symmetries governs the stability of handedness at a purely structural level.

\appendix

\section{Strengthening: Translation-Generated Variant}

\subsection{Translations and inner mappings}

For $L=(X,\cdot)$ define left and right translations
\[
L_x(y)=x\cdot y, \quad R_x(y)=y\cdot x.
\]
Let $\Mlt(L)$ be the multiplication group generated by all
$L_x$ and $R_x$, and $\Inn(L)$ its inner mapping subgroup.

\subsection{Stronger admissibility}

One may restrict admissible mirror-isotopisms to those generated
by translations or inner mappings.
This produces a strictly stronger obstruction:
if no translation-generated mirror-isotopism exists,
then $L$ is structurally chiral.

$$ $$

\noindent Takao Inou\'{e}

\noindent Faculty of Informatics

\noindent Yamato University

\noindent Katayama-cho 2-5-1, Suita, Osaka, 564-0082, Japan

\noindent inoue.takao@yamato-u.ac.jp
 
\noindent (Personal) takaoapple@gmail.com (I prefer my personal mail)

\end{document}